\documentclass{article}

\usepackage{type1cm}       
\usepackage{makeidx}         
\usepackage{graphicx}        
\usepackage{multicol}        
\usepackage[bottom]{footmisc}

\usepackage[a4paper, total={6.5in, 6.5in}]{geometry}

\usepackage{bm}
\usepackage{todonotes}
\usepackage{amsmath}
\usepackage{amsfonts}
\usepackage{amssymb}
\usepackage{amsthm}
\usepackage{array}
\usepackage{float}
\usepackage{subfig}
\usepackage[ruled, lined, longend, linesnumbered]{algorithm2e}

\usepackage{pgfplots}
\pgfplotsset{compat=newest}
\usepackage{tikz}

\newtheorem{remark}{Remark}
\newtheorem{theorem}{Theorem}
\newtheorem{definition}{Definition}
\newtheorem{lemma}{Lemma}

\usepackage{newtxtext}       %
\usepackage{newtxmath}       

\title{The fixed-stress splitting scheme for Biot's equations as a modified Richardson iteration: Implications for optimal convergence}

\author{Erlend Storvik, Jakub Wiktor Both, Jan Martin Nordbotten and Florin Adrian Radu\footnote{University of Bergen, 5007 Bergen, Norway; {\{erlend.storvik, jakub.both, jan.nordbotten, florin.radu\}@uib.no}}
} 
\date{}

\makeindex             


\begin{document}


\maketitle

\abstract{The fixed-stress splitting scheme is a popular method for iteratively solving the Biot equations. The method successively solves the flow and mechanic 
subproblems while adding a stabilizing term to the flow equation, which includes a parameter that can be chosen freely. However, the convergence properties of 
the scheme depend significantly on this parameter and choosing it carelessly might lead to a very slow, or even diverging, method. In this paper, we present a way 
to exploit the matrix structure arizing from discretizing the equations in the regime of 
impermeable porous media in order to obtain {\it a priori} knowledge of the optimal choice of this tuning/stabilization parameter.}

\section{Introduction}
\label{sec:1}
Due to many applications of societal consequence, ranging from life sciences to environmental engineering, simulation of flow in deformable porous 
media is of great interest. A choice of a model is the quasi-static Biot equations, which couples balance of linear momentum, and volume balance. In the most basic model, allowing only small deformations and fully saturated media, the equations become 
linear. However, the coupled problem has a complex structure, and it is not trivial to solve the full problem monolithically. On the other hand, there are many efficient solvers available for both porous media flow and elasticity. Therefore, splitting 
solvers are a popular alternative, where one, often using readily available software, solves the subproblems iteratively. 

In order to ensure that such splitting solvers converge, a stabilizing term is added to one, or both, of the equations. 
Choosing this term is important for the convergence properties of the scheme. Particularly, in problems with high coupling strength, 
the number of iterations to achieve convergence varies significantly for different stabilizations. In the fixed-stress splitting scheme, 
the original idea was to choose the stabilization term in order to preserve the volumetric stress over the iterations, see \cite{settari1998}, 
by adding a scaled increment of pressures to the flow equation. Convergence was proved mathematically 
in \cite{andro} and later, using a different approach, in \cite{jakubAML}. In \cite{storvik}, a range in which the optimal stabilization term recides was provided theoretically, and verified numerically. Additionally, a numerical scheme to find the parameter was proposed.

In this work, we continue the discussion on the optimal choice of the stabilization parameter of the fixed-stress splitting scheme. Particularly, for the case of impermeable porous 
media, where the coupling strength is known to be high \cite{kim}. Moreover, we expect it to be relevant for 
the more general low-permeable case. To do so, we examine the matrix 
structure of the linear problem that arises when we apply the fixed-stress splitting scheme for an idealistically impermeable problem and realize this as a modified 
Richardson iteration. Using theory for the Richardson iteration, we find the optimal stabilization parameter
and discuss how to compute it. To summarize, the contributions are: 
\begin{itemize}
 \item A proof that the fixed-stress splitting scheme can be posed as a modified Richardson iteration, with a link between the optimal constant for the modified Richardson iteration and the stabilization parameter in the fixed-stress splitting scheme. 
 \item A discussion on how to compute this stabilization parameter.
\end{itemize}

It is also worth noticing that the fixed-stress splitting scheme can be derived using a generalized gradient flow approach, \cite{jakubGradientFlow}, and can be combined with a wide range of discretizations, including space-time finite elements \cite{bause}. Moreover, it can be seen as a smoother for multigrid methods \cite{pacomultigrid}. In \cite{white}, the authors derived a relation between the fixed-stress splitting scheme and the modified Richardson iteration, and applied it as a preconditioner for Krylov subspace methods for solving the monolithic problem.

\section{The quasi-static linear Biot equations}
\label{sec:2}
The quasi-static linear Biot equations are a coupling of linear momentum balance and mass balance (see \cite{coussy}):
\begin{align}
-\nabla \cdot \bm \sigma &= \bm{f}, \label{eq:mechanics} \\ 
\frac{\partial_t m}{\rho} + \nabla \cdot \bm q &= S_f, \label{eq:flow}
\end{align}
where $\bm \sigma$ is the stress tensor of the medium, $m$ is the fluid mass, $\rho$ is the fluid density, $\bm q$ is the Darcy flux, and $\bm f$ and $S_f$ are 
the body forces and sources/sinks, respectively. Now, the St.\ Venant-Kirchhoff stress tensor for the effective stress and Darcy's law is applied:
\begin{align}
\bm \sigma &= 2 \mu \varepsilon\left( \bm u\right) + \lambda \nabla\cdot {\bm u I}-\alpha p I \label{eq:stress}, 
\\\bm q  &= -\kappa\left(\nabla p - {\bm g}\rho\right)\label{eq:darcy},
\end{align}
where ${\bm u}$ is the displacement, $\varepsilon\left({\bm u}\right) = \frac{1}{2}\left(\nabla{\bm u + \nabla \bm u}^\top\right)$ is the (linear) strain tensor, 
$\mu, \lambda$ are the Lam\'{e} parameters, $\alpha$ is the Biot-Willis constant, $p$ is the fluid pressure, respectively, ${\bm g}$ is the 
gravitational vector and $\kappa$ is the permeability. The volumetric change is asserted to be proportional to the change in pore pressure and mechanical 
displacement; $\frac{\partial_t m}{\rho} = \frac{\partial}{\partial t}\left(\frac{p}{M} + \alpha \nabla \cdot {\bm u}\right)$,  where $M$ is a compressibility 
constant. Finally, we define initial conditions $\bm u(t=0) =\bm u_0 $ and $p(t=0) = p_0$, and boundary conditions $\bm u|_{\Gamma_{N,\bm u}} = \bm u_D$, $\bm \sigma|_{\Gamma_{N,\bm u}} = \bm \sigma_N$, $p|_{\Gamma_{D,p}} = p_D$ and $\bm q|_{\Gamma_{N,p}} = \bm q_N$, where $\partial\Omega = \Gamma_{D,\bm u} \cup \Gamma_{N,\bm u} = \Gamma_{D,p}\cup \Gamma_{N,p}$, for a  Lipschitz domain, $\Omega\subset \mathbb{R}^d$, $d$ being the spatial dimension.

\section{The discretized Biot equations in matrix form}
\label{sec:3}
Discretizing the Biot equations by e.g., conforming finite elements and implicit Euler in a two-field formulation $\left(\bm u,p\right)$ (making the substitutions \eqref{eq:stress} 
and \eqref{eq:darcy} in \eqref{eq:mechanics} and \eqref{eq:flow}), the resulting linear system in each time step can be written as follows

\begin{equation}
\label{eq:matrixBiot}
\begin{pmatrix}
{\bf A} & -{\bf B}^\top \\
{\bf B} & {\bf C}
\end{pmatrix}
\begin{pmatrix}
{\bf u}_h \\ {\bf p}_h
\end{pmatrix}
=
\begin{pmatrix}
\bf f \\ \bf g
\end{pmatrix},
\end{equation}
where ${\bf A}$ is the linear elasticity matrix, ${\bf B}$ is the coupling matrix, ${\bf C}$ is the single-phase flow matrix, 
${\bf f}$ and ${\bf g}$ correspond to the body forces and sources/sinks, respectively, and ${\bf u}_h$ and ${\bf p}_h$ are 
the coefficient vectors for the discretized displacement and pressure, respectively. For the rest of this paper, we consider an inf-sup stable finite element pair $({\bf V}_h,Q_h)$. Furthermore, for impermeable porous media, the submatrix 
${\bf C}$ reduces to $\frac{1}{M}{\bf M}$, where ${\bf M}$ is the mass matrix. 

\section{The fixed-stress splitting scheme as a modified Richardson iteration}
As mentioned in the introduction, the fixed-stress splitting scheme adds a scaled incremental pressure to the flow equation while eliminating its dependence on the 
displacement. For impermeable media, $\kappa=0$, this results in the following linear system
\begin{equation}
\label{eq:matrixFS}
\begin{pmatrix}
{\bf A} & -{\bf B}^\top \\
{\bf 0}& \left(L+\frac{1}{M}\right){\bf M}
\end{pmatrix}
\begin{pmatrix}
\Delta{\bf u}_h^i \\ \Delta{\bf p}_h^i
\end{pmatrix}
=
\begin{pmatrix}
\bf f \\ \bf g
\end{pmatrix}
-
\begin{pmatrix}
{\bf A} & {\bf -B}^\top \\
{\bf B} & {\frac{1}{M}\bf M}
\end{pmatrix}
\begin{pmatrix}
{\bf u}_h^{i-1}\\ {\bf p}_h^{i-1}
\end{pmatrix},
\end{equation}
where  $L$ is the stabilization parameter, $i\geq1$ is the iteration index, $\Delta {\bf u}_h^{i}={\bf u}_h^{i}-{\bf u}_h^{i-1}$ and $\Delta {\bf p}_h^{i}={\bf p}_h^{i}-{\bf p}_h^{i-1}$. 

\begin{remark}[Optimality of alternative splitting]
 Notice that for impermable media, $\kappa = 0$, and particular discretizations, e.g., (${\bf P}_1,P_0$), the undrained split \cite{kimundrained}, will converge in one iteration. Nevertheless, our experience is that the optimized fixed-stress splitting proposed here is superior for slight perturbations of the permeability. 
\end{remark}

In the original formulation of the fixed-stress splitting scheme \cite{settari1998} the constant $L=\tfrac{\alpha^2}{K_{dr}}$ was chosen, where 
$K_{dr}=\tfrac{2\mu}{d}+\lambda$, is the physical drained bulk modulus. Later, in \cite{andro}, convergence was proved for $L\geq\tfrac{\alpha^2}{2K_{dr}}$.
In \cite{storvik}, an interval containing the optimal stabilization parameter, including both of the two aforementioned parameters, was provided through mathematical proofs, and then verified numerically. We now show that the optimal 
parameter for impermeable media is a value in this domain that is possible to compute {\it a priori}. For this, we need the mathematical bulk modulus.
\begin{definition}
The mathematical bulk modulus, $K^\star_{dr}\geq K_{dr}$, is defined as the largest constant such that 
\begin{equation}\label{Kdr}
2 \mu \left\|\varepsilon\left(\bm u_h\right)\right\|^2+\lambda \|\nabla \cdot{\bm u_h} \|^2 \geq K^\star_{dr}\|\nabla \cdot{\bm u_h} \|^2\qquad\text{for all }\bm{u_h} \in {\mathbf V}_h.
\end{equation}
\end{definition}
It is easily seen that the physical drained bulk modulus satisfies inequality \eqref{Kdr}, but generally the bound is not sharp. 

Furthermore, by assuming that we have an inf-sup stable discretization we are able to define the following parameter, $\beta$, 
that is important in finding the optimal $L$. 
\begin{lemma}\label{Lemma1} Assume that the pair $({\bf V}_h, Q_h)$ is inf-sup stable. 
There exists $\beta >0$ such that for any $p_h \in Q_h$ there exists a ${\bm u}_h \in {\bf V}\!_h $ satisfying
$\left\langle \nabla \cdot {\bm u_h} ,q_h \right\rangle = \left\langle p_h, q_h \right\rangle $ for all $q_h \in Q_h$ and 
\begin{equation}
2 \mu\left\|{\bm \varepsilon(\bm u_h })\right\|^2 + \lambda \left\| \nabla\cdot {\bm u_h}\right\|^2\leq \beta \left\|p_h\right\|^2. \label{beta}
\end{equation}
\end{lemma}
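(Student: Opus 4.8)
The plan is to recognize the required identity $\langle \nabla\cdot\bm u_h, q_h\rangle = \langle p_h, q_h\rangle$ as the solvability of a discrete divergence equation, and then to dominate the elastic energy on the left of \eqref{beta} by the $H^1$-norm of the constructed $\bm u_h$. First I would introduce the discrete divergence operator $B\colon {\bf V}_h \to Q_h$ characterized by $\langle B\bm v_h, q_h\rangle = \langle \nabla\cdot\bm v_h, q_h\rangle$ for all $q_h \in Q_h$; in this notation the constraint to be satisfied is simply $B\bm u_h = p_h$. The inf-sup stability of $({\bf V}_h,Q_h)$ provides a constant $\beta_0>0$ such that
\begin{equation}
\sup_{\bm v_h \in {\bf V}_h} \frac{\langle \nabla\cdot\bm v_h, q_h\rangle}{\|\bm v_h\|_1} \geq \beta_0 \|q_h\| \qquad \text{for all } q_h \in Q_h,
\end{equation}
which is exactly the statement that the adjoint $B^\star$ is bounded below by $\beta_0$.

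Next I would invoke the standard Babuška–Brezzi consequence that this lower bound on $B^\star$ makes $B$ surjective onto $Q_h$ and furnishes a right inverse of norm at most $1/\beta_0$. Concretely, for the given $p_h$ I would choose the solution $\bm u_h$ of $B\bm u_h = p_h$ that is orthogonal to $\ker B$, so that $\|\bm u_h\|_1 \leq \tfrac{1}{\beta_0}\|p_h\|$. By construction this $\bm u_h$ satisfies the variational identity in the statement.

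It then remains to estimate the energy term by $\|\bm u_h\|_1$. Writing $\nabla\cdot\bm u_h = \operatorname{tr}\varepsilon(\bm u_h)$ and applying Cauchy–Schwarz to the trace gives $\|\nabla\cdot\bm u_h\| \leq \sqrt{d}\,\|\varepsilon(\bm u_h)\|$, while the symmetrization estimate yields $\|\varepsilon(\bm u_h)\| \leq \|\nabla\bm u_h\| \leq \|\bm u_h\|_1$. Combining these with the norm bound from the previous step,
\begin{equation}
2\mu\|\varepsilon(\bm u_h)\|^2 + \lambda\|\nabla\cdot\bm u_h\|^2 \leq (2\mu + \lambda d)\,\|\bm u_h\|_1^2 \leq \frac{2\mu + \lambda d}{\beta_0^2}\,\|p_h\|^2,
\end{equation}
so the claim holds with $\beta = (2\mu + \lambda d)/\beta_0^2$.

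I expect the only delicate point to be the passage from the sup-bound form of the inf-sup condition to the existence of a preimage $\bm u_h$ with a controlled norm. In the present finite-dimensional discrete setting this is elementary linear algebra (surjectivity of $B$ following from injectivity and coercivity of $B^\star$, plus the norm estimate for the minimal-norm preimage), but it should be stated carefully, since the rest of the argument is a routine application of Korn-type and trace inequalities. One should also be explicit about which norm on ${\bf V}_h$ enters the inf-sup constant, as this determines the precise value of $\beta$.
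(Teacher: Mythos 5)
Your proof is correct, and it is essentially the standard argument: the paper itself does not prove this lemma in-text but defers to the reference \cite{storvik}, where the same construction is used — the inf-sup condition yields a preimage of $p_h$ under the discrete divergence operator with $\|\bm u_h\|_1 \leq \beta_0^{-1}\|p_h\|$, and the energy is then bounded via $\|\varepsilon(\bm u_h)\| \leq \|\nabla \bm u_h\|$ and $\|\nabla\cdot\bm u_h\| \leq \sqrt{d}\,\|\varepsilon(\bm u_h)\|$. Your own flagged caveats (the closed-range/right-inverse step and the choice of norm on ${\bf V}_h$) are exactly the right ones, and your explicit constant $\beta = (2\mu + \lambda d)/\beta_0^2$ is a valid, if not necessarily sharp, admissible value.
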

A proof of this lemma can be found in \cite{storvik}.
\begin{theorem}\label{theorem:fs-richardson}
For impermable media, $\kappa = 0$, the fixed-stress splitting scheme \eqref{eq:matrixFS} can be interpreted as the modified Richardson iteration 
\begin{equation}\label{eq:FSRichardson}
{\bf p}_h^{i}={\bf p}_h^{i-1}+\omega\left({\bf M}^{-1}{\bf \tilde{g}} - {\bf M}^{-1}{\bf Sp}_h^{i-1}\right)
\end{equation} where $\omega=\left(L+\frac{1}{M}\right)^{-1}$, ${\bf \tilde{g} = g-A}^{-1}{\bf B f}$, and ${\bf S}=\frac{1}{M}{\bf M}+{\bf B}{\bf A}^{-1}{\bf B}^\top$ is the Schur complement. For the error ${\bf e}_p^i:= {\bf p}_h^i - {\bf p}_h$ it holds for all $i\geq1$
\begin{equation}\label{eq:FSRichardson-rate}
 \left\langle {\bf M} {\bf e}_p^i, {\bf e}_p^i\right\rangle 
 \leq 
 \left\| {\bf I} - \omega {\bf M}^{-1/2}{\bf S}{\bf M}^{-1/2} \right\|_2^2 \,
 \left\langle {\bf M} {\bf e}_p^{i-1}, {\bf e}_p^{i-1}\right\rangle.
\end{equation}
From that, the optimal choice of $\omega$ is 
\begin{equation}\label{eq:OptimalLimitCase}
\omega_{\mathrm{opt}}=\dfrac{2}{\lambda_\mathrm{max}\left( {\bf M}^{-\frac{1}{2}}{\bf S}{\bf M}^{-\frac{1}{2}}  \right)+\lambda_\mathrm{min}\left({\bf M}^{-\frac{1}{2}}{\bf S}{\bf M}^{-\frac{1}{2}}  \right)}
\end{equation}
with the identifications $$\lambda_\mathrm{max}\left( {\bf M}^{-\frac{1}{2}}{\bf S}{\bf M}^{-\frac{1}{2}}  \right)=\frac{1}{M}+\frac{\alpha^2}{K_{dr}^\star}\quad 
\mathrm{and} \quad \lambda_\mathrm{min}\left({\bf M}^{-\frac{1}{2}}{\bf S}{\bf M}^{-\frac{1}{2}}  \right)=\frac{1}{M}+\frac{\alpha^2}{\beta},$$ where $\alpha$ is the Biot-Willis coupling constant, $M$ is a compressibility coefficient, $K_{dr}^\star$ is the mathematical bulk modulus and $\beta$ is the constant from Lemma~\ref{Lemma1}. 
\end{theorem}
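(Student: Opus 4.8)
The plan is to reduce the block iteration \eqref{eq:matrixFS} to the scalar Richardson form \eqref{eq:FSRichardson}, read off the contraction estimate \eqref{eq:FSRichardson-rate}, perform the classical Richardson minimization to obtain \eqref{eq:OptimalLimitCase}, and finally identify the extreme eigenvalues of $\mathbf M^{-1/2}\mathbf S\mathbf M^{-1/2}$ with the stated quantities. First I would eliminate the displacement: the first block row of \eqref{eq:matrixFS} reads $\mathbf A\,\Delta\mathbf u_h^i-\mathbf B^\top\Delta\mathbf p_h^i=\mathbf f-\mathbf A\mathbf u_h^{i-1}+\mathbf B^\top\mathbf p_h^{i-1}$, which simplifies to $\mathbf A\mathbf u_h^i-\mathbf B^\top\mathbf p_h^i=\mathbf f$ at every step, so that $\mathbf u_h^{i-1}=\mathbf A^{-1}(\mathbf f+\mathbf B^\top\mathbf p_h^{i-1})$ once the initialization performs a consistent mechanics solve. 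Substituting this into the second block row $(L+\tfrac1M)\mathbf M\,\Delta\mathbf p_h^i=\mathbf g-\mathbf B\mathbf u_h^{i-1}-\tfrac1M\mathbf M\mathbf p_h^{i-1}$, collecting terms, and using the definitions of $\mathbf S$ and $\tilde{\mathbf g}$ gives $(L+\tfrac1M)\mathbf M\,\Delta\mathbf p_h^i=\tilde{\mathbf g}-\mathbf S\mathbf p_h^{i-1}$; multiplying by $\omega\mathbf M^{-1}$ with $\omega=(L+\tfrac1M)^{-1}$ yields \eqref{eq:FSRichardson}. Since the exact pressure solves the Schur system $\mathbf S\mathbf p_h=\tilde{\mathbf g}$, subtracting this fixed point gives $\mathbf e_p^i=(\mathbf I-\omega\mathbf M^{-1}\mathbf S)\mathbf e_p^{i-1}$; conjugating with $\mathbf M^{1/2}$ turns the iteration matrix into the symmetric $\mathbf I-\omega\mathbf M^{-1/2}\mathbf S\mathbf M^{-1/2}$, and taking the Euclidean norm and squaring produces \eqref{eq:FSRichardson-rate}. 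Because $\hat{\mathbf S}:=\mathbf M^{-1/2}\mathbf S\mathbf M^{-1/2}$ is symmetric positive definite, $\|\mathbf I-\omega\hat{\mathbf S}\|_2=\max(|1-\omega\lambda_{\min}|,|1-\omega\lambda_{\max}|)$, and minimizing this envelope over $\omega>0$ via the equioscillation condition $1-\omega\lambda_{\min}=-(1-\omega\lambda_{\max})$ gives \eqref{eq:OptimalLimitCase}.

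The crux is the spectral identification. Writing $\hat{\mathbf S}=\tfrac1M\mathbf I+\mathbf T$ with $\mathbf T:=\mathbf M^{-1/2}\mathbf B\mathbf A^{-1}\mathbf B^\top\mathbf M^{-1/2}$, its eigenvalues are those of $\mathbf T$ shifted by $\tfrac1M$, so it suffices to show $\lambda_{\max}(\mathbf T)=\alpha^2/K_{dr}^\star$ and $\lambda_{\min}(\mathbf T)=\alpha^2/\beta$. I would denote by $a(\bm u_h,\bm v_h)=2\mu\langle\varepsilon(\bm u_h),\varepsilon(\bm v_h)\rangle+\lambda\langle\nabla\cdot\bm u_h,\nabla\cdot\bm v_h\rangle$ the elasticity form represented by $\mathbf A$, and recall that $\mathbf B$ represents $\alpha\langle\nabla\cdot\bm v_h,q_h\rangle$. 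The eigenvalues of $\mathbf T$ are the generalized Rayleigh quotients $\langle\mathbf B\mathbf A^{-1}\mathbf B^\top\mathbf y,\mathbf y\rangle/\langle\mathbf M\mathbf y,\mathbf y\rangle$, and the dual-norm identity $\mathbf b^\top\mathbf A^{-1}\mathbf b=\sup_{\mathbf v}(\mathbf b^\top\mathbf v)^2/(\mathbf v^\top\mathbf A\mathbf v)$ applied with $\mathbf b=\mathbf B^\top\mathbf y$ rewrites these, for $p_h$ the function with coefficient vector $\mathbf y$, as
\[
\frac{\langle\mathbf B\mathbf A^{-1}\mathbf B^\top\mathbf y,\mathbf y\rangle}{\|p_h\|^2}=\frac{\alpha^2}{\|p_h\|^2}\,\sup_{\bm v_h\in\mathbf V_h}\frac{\langle p_h,\nabla\cdot\bm v_h\rangle^2}{a(\bm v_h,\bm v_h)}.
\]

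For the maximum, Cauchy--Schwarz gives $\langle p_h,\nabla\cdot\bm v_h\rangle^2\le\|p_h\|^2\|\nabla\cdot\bm v_h\|^2$, while the defining inequality \eqref{Kdr} of the mathematical bulk modulus gives $a(\bm v_h,\bm v_h)\ge K_{dr}^\star\|\nabla\cdot\bm v_h\|^2$; together they bound the inner supremum by $\|p_h\|^2/K_{dr}^\star$, so $\lambda_{\max}(\mathbf T)\le\alpha^2/K_{dr}^\star$, and testing with a minimizer $\bm u_h^\star$ of \eqref{Kdr} together with $p_h=\nabla\cdot\bm u_h^\star$ makes both estimates tight, giving equality. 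For the minimum, Lemma~\ref{Lemma1} furnishes, for each $p_h$, a displacement $\bm u_h$ with $\langle\nabla\cdot\bm u_h,q_h\rangle=\langle p_h,q_h\rangle$ for all $q_h\in Q_h$ and $a(\bm u_h,\bm u_h)\le\beta\|p_h\|^2$; choosing $\bm v_h=\bm u_h$ in the supremum and using $\langle p_h,\nabla\cdot\bm u_h\rangle=\|p_h\|^2$ yields $\langle\mathbf B\mathbf A^{-1}\mathbf B^\top\mathbf y,\mathbf y\rangle\ge(\alpha^2/\beta)\|p_h\|^2$, hence $\lambda_{\min}(\mathbf T)\ge\alpha^2/\beta$, with equality when $\beta$ is the optimal constant of Lemma~\ref{Lemma1}. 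Adding the $\tfrac1M$ shift then completes the two identifications and the proof.

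I expect the main obstacle to be this final stage, namely upgrading the two variational bounds to equalities rather than merely establishing their validity. The maximum relies on $\nabla\cdot\mathbf V_h\subseteq Q_h$, so that $\nabla\cdot\bm u_h^\star$ is an admissible test pressure and the Cauchy--Schwarz step is simultaneously sharp; the minimum relies on interpreting $\beta$ as the sharp constant in Lemma~\ref{Lemma1}, equivalently the reciprocal of the smallest generalized eigenvalue of the energy-minimizing divergence lifting. Pinning down these two conventions is exactly what converts $\lambda_{\max}(\mathbf T)\le\alpha^2/K_{dr}^\star$ and $\lambda_{\min}(\mathbf T)\ge\alpha^2/\beta$ into the claimed equalities.
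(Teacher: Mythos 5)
Your proof is correct and follows the paper's overall architecture: elimination of the displacement via the first block row, substitution into the flow row to obtain \eqref{eq:FSRichardson}, symmetrization by conjugation with ${\bf M}^{1/2}$, and the classical equioscillation argument for $\omega_{\mathrm{opt}}$ are all exactly what the paper does (the paper delegates the last step to \cite{Saad2003}). Where you genuinely diverge is in the spectral identification. You characterize both extreme eigenvalues of ${\bf T}={\bf M}^{-1/2}{\bf B}{\bf A}^{-1}{\bf B}^\top{\bf M}^{-1/2}$ through the single dual-norm identity ${\bf b}^\top{\bf A}^{-1}{\bf b}=\sup_{\bf v}({\bf b}^\top{\bf v})^2/({\bf v}^\top{\bf A}{\bf v})$, whereas the paper treats the two eigenvalues by different primal computations: for $\lambda_{\max}$ it solves the elasticity problem $2\mu\langle\varepsilon(\bm u_h),\varepsilon(\bm v_h)\rangle+\lambda\langle\nabla\cdot\bm u_h,\nabla\cdot\bm v_h\rangle=\alpha\langle p_h,\nabla\cdot\bm v_h\rangle$ and bounds the resulting energy, and for $\lambda_{\min}$ it sets up the constrained minimization $\min\{{\bf u}_h^\top{\bf A}{\bf u}_h:\tfrac{1}{\alpha}{\bf B}^\top{\bf u}_h={\bf M}{\bf p}_h\}$ and solves it explicitly with a Lagrange multiplier, arriving at $\beta=\alpha^2\lambda_{\min}(\cdot)^{-1}$. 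The two routes are dual to one another and yield the same identities; your version is more uniform and shorter, the paper's Lagrangian computation has the advantage of producing the minimizer (and hence the sharp $\beta$) in closed form. A further point in your favor: you explicitly flag the two conventions needed to upgrade the one-sided bounds to equalities, namely that the Cauchy--Schwarz and $K_{dr}^\star$ estimates must be simultaneously attainable with an admissible $p_h$ (which touches on whether $\nabla\cdot{\bf V}_h\subseteq Q_h$), and that $\beta$ must be the optimal constant of Lemma~\ref{Lemma1} realized by the energy-minimizing lifting. The paper asserts both equalities without spelling this out, so your caveats identify a real (if minor) looseness in the published argument rather than a gap in yours.
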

\begin{proof}
From \eqref{eq:matrixFS} we have that 
\begin{equation}\label{eq:2}
\Delta {\bf p}_h^{i}=\left(\left(L+\frac{1}{M}\right){\bf M}\right)^{-1}\left({\bf g} - {\bf B}{\bf u}_h^{i-1}-\frac{1}{M}{\bf M}{\bf p}_h^{i-1}\right)
\end{equation}
and
\begin{equation}\label{eq:3}
{\bf u}_h^{i-1}={\bf A}^{-1}{\bf f} +{\bf  A}^{-1}{\bf B}^\top{\bf p}_h^{i-1}.
\end{equation}
From the update of pressures in the fixed-stress splitting scheme we get the modified Richardson iteration \eqref{eq:FSRichardson}

$${\bf p}_h^{i}={\bf p}_h^{i-1}+\Delta {\bf p}_h^{i}= 
{\bf p}_h^{i-1}+\left(L+\frac{1}{M}\right)^{-1}\left({\bf M}^{-1}{\bf \tilde{g}}-{\bf M}^{-1}{\bf S}{\bf p}_h^{i-1}\right).$$

To find the optimal choice of the parameter $\omega$ in \eqref{eq:FSRichardson} we modify the equation slightly by making the substitution ${\bf p}_h^{i}={\bf M}^{-\frac{1}{2}}{\bf \tilde{p}}_h^{i}$ and multiply from the left by ${\bf M}^\frac{1}{2}$ to get 
$${\bf \tilde{p}}_h^{i}={\bf \tilde{p}}_h^{i-1}+\left(L+\frac{1}{M}\right)^{-1}\left({\bf M}^{-\frac{1}{2}}{\bf \tilde{g}}-{\bf M}^{-\frac{1}{2}}{\bf S}{\bf M}^{-\frac{1}{2}}{\bf \tilde{p}}_h^{i-1}\right),$$
where ${\bf M}^{-\frac{1}{2}}{\bf S}{\bf M}^{-\frac{1}{2}}$ is symmetric. By the standard theory for the modified Richardson iteration~\cite{Saad2003}, we conclude 
the optimal tuning parameter~\eqref{eq:FSRichardson} and corresponding rate~\eqref{eq:FSRichardson-rate}. 
Now, to make the identification  $\lambda_\mathrm{max}\left( {\bf M}^{-\frac{1}{2}}{\bf S}{\bf M}^{-\frac{1}{2}}  \right)=\dfrac{\alpha^2}{K_{dr}^\star}+\dfrac{1}{M}$ we consider Rayleigh quotients,

\begin{align*}
\lambda_\mathrm{max}\left( {\bf M}^{-\frac{1}{2}}{\bf S}{\bf M}^{-\frac{1}{2}}  \right) &= \sup_{\bf p\neq 0}\dfrac{{\bf p}^\top{\bf M}^{-\frac{1}{2}}{\bf S}{\bf M}^{-\frac{1}{2}}{\bf p}}{{\bf p}^\top{\bf p}}=
\frac{1}{M} + \sup_{{\bf p\neq0, Au}={\bf Bp}}\frac{{\bf u}^\top{\bf Au}}{{\bf p}^\top{\bf Mp}}\\&=\frac{1}{M} + \sup_{0\neq p_h\in Q_h}\frac{2\mu\left\|\varepsilon\left(\bm u_h\right)\right\|^2+\lambda\left\|\nabla\cdot \bm u_h\right\|^2}{\left\|p_h\right\|^2},
\end{align*}
where $\bm u_h\in \bm V_h$ solves 
$$2\mu\left\langle \varepsilon\left(\bm u_h\right),\varepsilon\left(\bm v_h\right)\right\rangle + \lambda \left\langle \nabla \cdot \bm u_h,\nabla\cdot\bm v_h\right\rangle = \alpha\left\langle p_h, \nabla\cdot\bm v_h\right\rangle \quad \mathrm{for\ all} 
\quad \bm v_h\in \bm V_h,$$ for given $p_h\in Q_h$.
Testing with $\bm v_h=\bm u_h$ we have 
\begin{eqnarray*}
&2\mu\left\| \varepsilon\left(\bm u_h\right)\right\|^2 + \lambda \left\| \nabla \cdot \bm u_h\right\|^2 =  \alpha\left\langle p_h, \nabla\cdot\bm u_h\right\rangle \leq \alpha\left\|p_h\right\|\left\|\nabla\cdot\bm u_h\right\|\\
&\leq \frac{\alpha}{\sqrt{K_{dr}^\star}}\left\|p_h\right\|\sqrt{2\mu\left\| \varepsilon\left(\bm u_h\right)\right\|^2 + \lambda \left\| \nabla \cdot \bm u_h\right\|^2 }
\end{eqnarray*}
by the Cauchy-Schwarz inequality and the definition of ${K_{dr}^\star}$. This implies that
$$\lambda_\mathrm{max}\left( {\bf M}^{-\frac{1}{2}}{\bf S}{\bf M}^{-\frac{1}{2}}  \right)=\frac{1}{M} + \frac{\alpha^2}{K_{dr}^\star}.$$
For the identification $\lambda_\mathrm{min}\left( {\bf M}^{-\frac{1}{2}}{\bf S}{\bf M}^{-\frac{1}{2}}  \right)=\dfrac{1}{M}+\dfrac{\alpha^2}{\beta}$, recognize that 
$\lambda_\mathrm{min}\left( {\bf M}^{-\frac{1}{2}}{\bf S}{\bf M}^{-\frac{1}{2}}  \right) = 
\dfrac{1}{M}+\lambda_\mathrm{min}\left( {\bf M}^{-\frac{1}{2}}{\bf B^\top A}^{-1}{\bf B}{\bf M}^{-\frac{1}{2}}  \right)$, and consider the 
algebraic form of Lemma~\ref{Lemma1},
\begin{equation}\label{eq:lemMatrix}
\exists \beta \ \forall {\bf p}_h\ \exists {\bf u}_h\ : \ {\bf u}_h^\top{\bf A u}_h\leq \beta{\bf p}_h^\top{\bf M p}_h \ \mathrm{and} \ \frac{1}{\alpha}{\bf B}^\top{\bf u}_h={\bf M p}_h.
\end{equation}
Moreover, recognize that for inf-sup stable discretizations ${\bf B^\top A}^{-1}{\bf B}$ is invertible. Let ${\bf \tilde{u}}_h$ be the minimizer of $\left\{ {\bf u}_h^\top{\bf A u}_h \ : \ \dfrac{1}{\alpha}{\bf B}^\top{\bf u}_h={\bf M p}_h\right\}$. Finding the saddle point using the Lagrangian 
$$\mathcal{L}\left({\bf u}_h,{\bf\Lambda}\right) = {\bf u}_h^\top{\bf A u}_h+{\bf \Lambda}^\top\left( \dfrac{1}{\alpha}{\bf B}^\top{\bf u}_h-{\bf M p}_h\right)$$
we get the solutions
${\bf \Lambda}=2\alpha^2{\left({\bf B^\top A}^{-1}{\bf B}\right)}^{-1}{\bf Mp_h}$ and ${\bf \tilde{u}}_h=\dfrac{1}{2\alpha}{\bf A}^{-1}{\bf B\Lambda}=
\alpha{\bf B^{-\top}}{\bf Mp}_h$. 
Hence, the minimizer satisfies $${\bf \tilde{u}}^\top_h{\bf A \tilde{u}}_h=\alpha^2{\bf p}_h^\top{\bf M}{{\left({\bf B^\top A}^{-1}{\bf B}\right)}^{-1}}{\bf Mp}_h.$$ 
From \eqref{eq:lemMatrix} we get, with ${\bf u}_h$ depending on ${\bf p}_h$ as above, 
\begin{eqnarray*}
&\beta = \sup_{{\bf p}_h\neq {\bf 0}} \dfrac{{\bf \tilde{u}}^\top_h{\bf A \tilde{u}}_h}{{\bf p}_h^\top{\bf M p}_h}=\alpha^2\sup_{{\bf p}_h\neq {\bf 0}} \dfrac{{\bf p}_h^\top{\bf M}{{\left({\bf B^\top A}^{-1}{\bf B}\right)}^{-1}}{\bf Mp}_h}{{\bf p}_h^\top{\bf M p}_h}
\\&= \alpha^2\sup_{{\bf p}_h\neq {\bf 0}} \dfrac{{\bf p}_h^\top{\bf M}^\frac{1}{2}{{\left({\bf B^\top A}^{-1}{\bf B}\right)}^{-1}}{\bf M}^\frac{1}{2}{\bf p}_h}{{\bf p}_h^\top{\bf  p}_h}
 = \ \alpha^2\lambda_{\mathrm{min}}\left({\bf M}^{-\frac{1}{2}}{{\bf B^\top A}^{-1}{\bf B}}{\bf M}^{-\frac{1}{2}}\right)^{-1}.
\end{eqnarray*}
\end{proof}
As a result of Theorem~\ref{theorem:fs-richardson} we get the optimal stabilization parameter $L_\mathrm{opt}=\frac{\alpha^2}{2}\left(\frac{1}{K_{dr}^{\star}}+\frac{1}{\beta}\right)$, which includes information on the boundary conditions.

\subsection{Computing the optimal stabilization parameter}
There are two main options now for finding the optimal stabilization parameter. One could estimate $K_{dr}^\star$ by $K_{dr}$ and find $\beta$ by some table search of inf-sup constants, but our experience is that this generally is not good enough. Therefore, we suggest to approximate the eigenvalues $\lambda_\mathrm{max}\left( {\bf M}^{-\frac{1}{2}}{\bf S}{\bf M}^{-\frac{1}{2}}  \right)$ and $\lambda_\mathrm{min}\left({\bf M}^{-\frac{1}{2}}{\bf S}{\bf M}^{-\frac{1}{2}}  \right)$ using the power iteration \cite{Saad2003} or some other cheap inexact scheme for finding maximal and minimal eigenvalues. Notice that this is possible to do without explicitly computing ${\bf S}$. Also, the computation of $K_{dr}^\star$ is relatively cheap while the computation of $\beta$ is not -- it involves applying the fixed-stress splitting scheme with a non-optimal stabilization parameter. We suggest using coarse approximations of both to define an approximation of $L_\mathrm{opt}$. This approximation can be expected to still yield relatively good performance of the fixed-stress splitting scheme.

\section{Numerical examples}

We test Theorem~\ref{theorem:fs-richardson} numerically including the optimality of the proposed stabilization parameter. For this, we consider a unit square test case with source terms that 
enforce parabolic displacement and pressure profiles and zero Dirichlet boundary conditions everywhere, but at the top boundary for the momentum balance equation where zero Neumann conditions are considered, 
see Figure~\ref{fig:limit-performance}(b) or \cite{storvik} for a more thorough explanation. We choose the parameters from Table~\ref{tab:usna}. The choice of the extreme values $M=\infty$ and 
$\kappa=0$ mimics the limit scenario of incompressibility and impermeability.
In this regime, the coupling strength of the Biot equations as defined by~\cite{kim} is infinite, and we consider the test a suitable stress test.

For fixed physical parameters, we test the performance of the fixed-stress splitting scheme employing different tuning parameters. For the inf-sup stable P2-P1 discretization, the average number of iterations is presented in Figure~\ref{fig:limit-performance}(a). Notice especially that the proposed optimal stabilization parameter truly is optimal.

\begin{table}[h]
\centering
\begin{tabular}{ l | c | r }
Name & Symbol & Value\\
\hline
Lam\'e parameters & $\mu$, $\lambda$ & $41.667\cdot10^{9}$, $27.778\cdot10^{9}$\\

Permeability and compressibility & $\kappa$, $\frac{1}{M}$& $0$, $0$ \\

Temporal parameters& $t_0$, $\tau$, $T$ & $0$, $0.1$, $1$ \\

Biot-Willis coefficient& $\alpha$ & $1$ \\

Relative error tolerance& $\epsilon_r$ & $10^{-6}$  \\

Inverse of mesh size diameter & $1/h$ & $16, 32, 64, 128$\\
 
\end{tabular}
\caption{Table of coefficients}
\label{tab:usna}
\end{table}

\begin{figure}[h!]
\begin{minipage}{0.5\textwidth}
\centering
\subfloat[Numerical results]{\begin{tikzpicture}
\begin{axis}[
    ylabel={Avg.\ \#iter.\ pr.\ time step},
    width = 0.9\textwidth,
    xlabel={$\mathcal{D} \cdot10^{-11}$},
   legend entries={{$1/h=16$, $1/h=32$, $1/h=64$, $1/h=128$}},
    legend cell align=left,
    legend columns=2,
    legend style={at={(0.05,0.8)},anchor=west},
   xmin = 0.65,
    xmax = 1.6,
    ymin = 10,
    ymax = 70 
]
\addplot [solid, mark=*, mark repeat=1, mark size =3, color=red, only marks]   coordinates {
(0.690, 41.6)
(0.766, 37.4)
(0.843, 33.9)
(0.919, 31.0)
(0.996, 28.5)
(1.072, 26.3)
(1.149, 24.4)
(1.225, 23.5)
(1.271, 27.5)
(1.302, 31.3)
(1.332, 35.9)
(1.363, 41.9)
(1.393, 49.9)
(1.424, 61.1)
(1.455, 78.4)
(1.531, 231.9)
};
\addplot [solid, mark=square*, mark repeat=1, mark size = 3, color=teal, only marks]   coordinates {
(0.690, 45.3)
(0.766, 40.7)
(0.843, 37.1)
(0.919, 33.9)
(0.996, 31.3)
(1.072, 28.9)
(1.149, 26.9)
(1.225, 29.7)
(1.271, 37.1)
(1.302, 44.2)
(1.332, 53.9)
(1.363, 68.5)
(1.393, 93.1)
};
\addplot [solid, mark=triangle*, mark repeat=1, mark size = 4, color=green, only marks]   coordinates {
(0.690, 47.6)
(0.766, 42.9)
(0.843, 39.1)
(0.919, 35.9)
(0.996, 33.0)
(1.072, 30.7)
(1.149, 28.5)
(1.225, 34.9)
(1.271, 45.8)
(1.302, 57.1)
(1.332, 74.8)
};
\addplot [solid, mark=diamond*, mark repeat=1, mark size = 4, color=blue, only marks]   coordinates {
(0.690, 49.1)
(0.766, 44.3)
(0.843, 40.3)
(0.919, 36.9)
(0.996, 34.0)
(1.072, 31.7)
(1.149, 29.4)
(1.225, 38.5)
(1.271, 52.3)
(1.302, 67.5)
(1.332, 93.8)
};
\addplot [solid, color=red]   coordinates {
(0.690, 41.6)
(0.766, 37.4)
(0.843, 33.9)
(0.919, 31.0)
(0.996, 28.5)
(1.011, 28.0)
(1.026, 27.5)
(1.041, 27.2)
(1.057, 26.8)
(1.072, 26.3)
(1.087, 26.0)
(1.103, 25.4)
(1.118, 25.2)
(1.133, 24.9)
(1.149, 24.4)
(1.164, 24.2)
(1.179, 23.7)
(1.194, 23.4)
(1.210, 23.3)
(1.225, 23.5)
(1.240, 24.5)
(1.256, 26.0)
(1.271, 27.5)
(1.286, 29.4)
(1.302, 31.3)
(1.332, 35.9)
(1.363, 41.9)
(1.393, 49.9)
(1.424, 61.1)
(1.455, 78.4)
(1.531, 231.9)
};
\addplot [solid, color=teal]   coordinates {
(0.690, 45.3)
(0.766, 40.7)
(0.843, 37.1)
(0.919, 33.9)
(0.996, 31.3)
(1.011, 30.8)
(1.026, 30.3)
(1.041, 29.9)
(1.057, 29.4)
(1.072, 28.9)
(1.087, 28.5)
(1.103, 28.0)
(1.118, 27.6)
(1.133, 27.3)
(1.149, 26.9)
(1.164, 26.5)
(1.179, 26.3)
(1.194, 26.5)
(1.210, 27.9)
(1.225, 29.7)
(1.240, 31.9)
(1.256, 34.3)
(1.271, 37.1)
(1.286, 40.4)
(1.302, 44.2)
(1.332, 53.9)
(1.363, 68.5)
(1.393, 93.1)
};
\addplot [solid, color=green]   coordinates {
(0.690, 47.6)
(0.766, 42.9)
(0.843, 39.1)
(0.919, 35.9)
(0.996, 33.0)
(1.011, 32.5)
(1.026, 32.0)
(1.041, 31.6)
(1.057, 31.0)
(1.072, 30.7)
(1.087, 30.1)
(1.103, 29.7)
(1.118, 29.3)
(1.133, 28.9)
(1.149, 28.5)
(1.164, 28.2)
(1.179, 28.5)
(1.194, 30.0)
(1.210, 32.3)
(1.225, 34.9)
(1.240, 38.1)
(1.256, 41.6)
(1.271, 45.8)
(1.286, 51.0)
(1.302, 57.1)
(1.332, 74.8)
};
\addplot [solid, color=blue]   coordinates {
(0.690, 49.1)
(0.766, 44.3)
(0.843, 40.3)
(0.919, 36.9)
(0.996, 34.0)
(1.011, 33.6)
(1.026, 33.0)
(1.041, 32.6)
(1.057, 32.0)
(1.072, 31.7)
(1.087, 31.0)
(1.103, 30.7)
(1.118, 30.3)
(1.133, 29.9)
(1.149, 29.4)
(1.164, 29.4)
(1.179, 30.4)
(1.194, 32.6)
(1.210, 35.5)
(1.225, 38.5)
(1.240, 42.4)
(1.256, 46.8)
(1.271, 52.3)
(1.286, 59.2)
(1.302, 67.5)
(1.332, 93.8)
};
\addplot [
color=red,
dashed,
line width=1.0pt]
table[row sep=crcr]{
1.2 -10\\
1.22 100\\
};
\addplot [
color=teal,
dashed,
line width=1.0pt]
table[row sep=crcr]{
1.19 -10\\
1.19 100\\
};
\addplot [
color=green,
dashed,
line width=1.0pt]
table[row sep=crcr]{
1.17 -10\\
1.17 100\\
};
\addplot [
color=blue,
dashed,
line width=1.0pt]
table[row sep=crcr]{
1.16 -10\\
1.16 100\\
};
\end{axis}
\end{tikzpicture}}
\end{minipage}
\begin{minipage}{0.48\textwidth}
\vspace{-15pt}
\subfloat[Displacement $\left(\left|\bm u_h \right|\right)$]{\centering \includegraphics[width=0.9\textwidth]{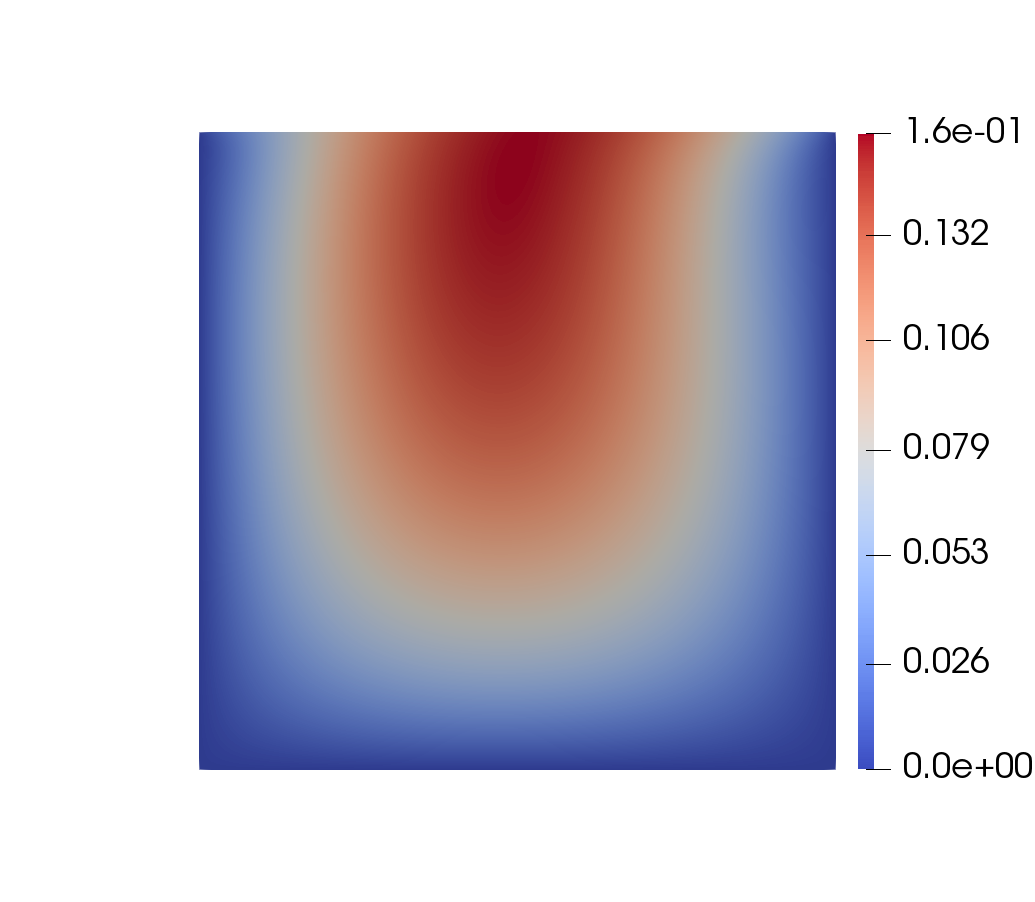}}
\end{minipage}
\caption{Average number of iterations per time step for different stabilization parameters, $L=\frac{\alpha^2}{\mathcal{D}}$, using parameters from Table~\ref{tab:usna}. 
The dashed lines highlight the location of the tuning parameter, 
$L_\mathrm{opt}$, computed using fine tolerances in the power iteration.}
\label{fig:limit-performance}
\end{figure}

\section{Conclusions}
In this work, we derived mathematically a way to {\it a priori} determine the optimal stabilization parameter for the fixed-stress splitting scheme using the theory for the modified Richardson iteration. This optimal parameter involves the computations of maximal eigenvalues for rather large matrices. However, they do not need to be computed to high accuracy. Through a numerical experiment, we showed that the proposed stabilization parameter is optimal for this problem. 

\bibliographystyle{unsrt}
\bibliography{optimal_stabilization_parameter}
\end{document}